\theoremstyle{definition} 
\theoremstyle{definition} 
\theoremstyle{definition} \newtheorem*{axi*}{Axiom}
\theoremstyle{definition} \newtheorem{prp}{Proposition}[section]
\theoremstyle{definition} 
\theoremstyle{definition} \newtheorem{lem}[prp]{Lemma}
\theoremstyle{definition} \newtheorem{thm}[prp]{Theorem}
\theoremstyle{definition} 
\theoremstyle{definition} 
\theoremstyle{remark} 
\theoremstyle{remark}
\newcommand \Q {\mathbb Q}
\newcommand \C {\mathbb C}
\newcommand \F {\mathbb F}
\newcommand{\subjclass}[2][1991]{%
  \let\@oldtitle\@title%
  \gdef\@title{\@oldtitle\footnotetext{#1 \emph{MSC classes.} #2}}%
}
\newcommand{\keywords}[1]{%
  \let\@@oldtitle\@title%
  \gdef\@title{\@@oldtitle\footnotetext{\emph{Keywords:} #1.}}%
}
\title{Special directions on the finite affine plane}
\author{Gergely Kiss\thanks{Alfr\'ed R\'enyi Institute of Mathematics, e-mail: kigergo57@gmail.com}, G\'abor Somlai \thanks{On an unpaid leave at E\"otv\"os Lor\'and University, Department of Algebra and Number Theory and Alfr\'ed Rényi Institute of Mathematics, A Fulbright scholar at the Graduate Center of the City University of New York, e-mail: gabor.somlai@ttk.elte.hu}}
\date{}
\begin{document}

\keywords{special direction, equidistributed direction, affine Galois plane, lacunary polynomials}

\subjclass[2020]{ 05B25, 51A15, 51E15}

\maketitle
\begin{abstract}
    In this paper we study the number of special directions of sets of cardinality divisible by $p$ on a finite plane of characteristic $p$, where $p$ is a prime. We show that there is no such a set with exactly two special directions. We characterise sets with exactly 3 special directions which answers a question of Ghidelli in negative. Further we introduce methods to construct sets of minimal cardinality that has exactly 4 special directions for small values of $p$.
\end{abstract}
\section{Introduction}
Let $S$ be a set of points in the affine plane over the finite field of $p$ elements, where $p$ is a prime.
A natural way of viewing points of the projective line is to consider the equivalence classes of the nonzero vectors of $\mathbb{F}_p^2$. We write $u \sim v$ if $u$ is a nonzero multiple of $v$ and the equivalence class of $u$ which we call the \textit{direction} of $u$ is denoted by $d(u)$. We say that a pair of vectors $w_1 \ne w_2 \in \mathbb{F}_p^2$ determines the direction $d(u)$ if $u \sim w_1-w_2$. Finally, we denote by $D(S)$, the set of directions determined by the pair of points in $S$.

 By elementary pigeonhole argument one can see that every direction is determined by any subset of $\mathbb{F}_p^2$ of cardinality at least $p+1$. It was proved by R\'edei that if $S$ is of cardinality $p$, then either $S$ is a line or $S$ determines at least $\frac{p+3}{2}$ directions, see \cite{redei}.
 The same result was independently proved by Dress, Klin and Muzychuk \cite{DKM}. As a corollary of their argument they obtained a new proof for Burnside's classical theorem on permutation groups of prime degree.

 This was generalised by Sz\H{o}nyi, who proved that if $|S|<p$ and $S$ is not contained in a line, then $|D(S)|\ge \frac{|S|+3}{2}$. Lov\'asz and Schrijver \cite{LS} showed that if $|D(S)|=\frac{|S|+3}{2}$ for some $|S|=p$, then $S$ is an affine transform of the graph of the function $f(x)=x^{\frac{p+1}{2}}$.
 In \cite{gacs} G\'acs proved that the $|D(S)|$ cannot be between $\frac{{p + 5}} {2}$ and $2\frac{{p - 1}} {3}$ and showed that the upper bound obtained is one less than the smallest known example.

Another way of thinking of directions of $p$-element subsets of $\mathbb{F}_p^2$ is the following. We say that $S$ is \textit{equidistributed} in a direction if $S$ intersects the lines having the corresponding fixed slope in the same amount of points.
Equidistributivity is one of the key tools in investigating spectral sets of finite abelian groups, see \cite{covenmeyerowitz},\cite{KMSV},\cite{laba}.
One can also see that for sets $|S|=p$ we have that $S$ is equidistributed in the direction $m$ if and only if $m \not\in D(S)$.
In general, equidistributivity of a set of a certain direction implies that $p \mid |S|$. This motivates that we only study sets whose cardinality is a multiple of $p$ in the remaining part of the paper.

The investigation of sets of cardinality larger than $p$ was initiated by Ghidelli \cite{ghidelli}. It was proved that a set of cardinality $kp$ ($1 \le k \le p,~ k \in \mathbb{Z}$) is either a set of parallel lines or is not equidistributed in at least $\lceil \frac{p+k+2}{k+1}\rceil$ directions. From now on we call a direction \textit{special} if $S$ is not equidistributed in that direction.
Note that Ghidelli's definition of special direction is more general than this one and his results also handle sets whose cardinality is not divisible by $p$. For sets of cardinality divisible by $p$ the two definitions of special directions coincide.
It was asked by Ghidelli, whether the sets, which are not the union of a set of parallel lines, determine at least $\frac{p+3}{2}$ special directions.

The main purpose of this paper is to construct an example to answer Ghidelli's problem in the negative. We prove the following theorem.
\begin{thm}
Up to an affine transformation, there is a unique set $S$ of size $\frac{p(p-1)}{2}$ in $\mathbb{F}_p^2$, which is equidistributed in $p-2$ directions. Moreover, every set having exactly 3 special directions can be transformed by an affine transformation (elements of $AGL(2,p)$) to either $S$ or $S^c$, where $S^c$ is the complement of $S$ in $\mathbb{F}_p^2$.
\end{thm}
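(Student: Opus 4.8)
The plan is to encode equidistribution as a perfect-power identity for a Rédei-type polynomial and then read off the extremal set from the resulting lacunary structure. For $S\subseteq\F_p^2$ with $|S|=N=kp$ I set $R_S(X,Y)=\prod_{(a,b)\in S}(X+aY-b)$. For a finite slope $m$ the multiset $\{b-am:(a,b)\in S\}$ records how $S$ meets the lines of slope $m$, so $S$ is equidistributed in direction $m$ precisely when $R_S(X,m)=(X^p-X)^k$, while $\prod_{(a,b)\in S}(X-a)=(X^p-X)^k$ handles the vertical direction. Hence ``exactly $3$ special directions'' says this identity holds for exactly $p-2$ of the $p+1$ directions. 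Since the linear parts of $AGL(2,p)$ induce the sharply $3$-transitive action of $PGL(2,p)$ on the directions, I first apply an affine map sending the three special directions to the vertical, horizontal and slope $-1$ directions; equivalently the special linear forms become $x$, $y$, $x+y$ and the equidistributed slopes become $m=1,\dots,p-2$.

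Writing $R_S(X,Y)=\sum_{j=0}^{N}(-1)^je_j(Y)X^{\,N-j}$, the coefficient $e_j(Y)$ is the $j$-th elementary symmetric function of $\{b-aY\}$, of degree at most $j$ in $Y$. As $(X^p-X)^k=\sum_{i=0}^{k}\binom{k}{i}(-1)^iX^{\,N-i(p-1)}$ involves only exponents congruent to $N$ modulo $p-1$, equidistribution at a slope $m$ forces $e_j(m)=0$ whenever $j\not\equiv0\pmod{p-1}$. Letting $m$ run through the $p-2$ equidistributed slopes and using $\deg e_j\le j$, root counting gives $e_j\equiv0$ for all $1\le j\le p-3$. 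Applying the same reasoning to the complement (whose Rédei polynomial satisfies $R_S(X,m)R_{S^c}(X,m)=(X^p-X)^p$ for $m\in\F_p$, so $S$ and $S^c$ share their special directions) constrains the top coefficients symmetrically, and $R_S$ emerges as a genuinely lacunary polynomial in $X$, with nonzero coefficient-polynomials confined to a short window around the middle degree.

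The crux, and what I expect to be the main obstacle, is to pass from this lacunary shape to the explicit configuration and to rule out every competitor. I would prove the rigidity statement that in each special direction the sequence of line-intersection numbers is a permutation of $\{0,1,\dots,p-1\}$, rather than merely an unordered multiset with the right sum. Granting this, the vertical and horizontal counts label rows and columns bijectively by $\{0,\dots,p-1\}$ so that row $x$ and column $y$ meet $S$ in $p-1-x$ and $p-1-y$ points, and equidistribution in every slope $m=1,\dots,p-2$ forces these to interlock as the carry pattern of addition, identifying $S$ with the triangle $\{(x,y):\tilde x+\tilde y\le p-2\}$ (here $\tilde{\phantom{x}}$ denotes the representative in $\{0,\dots,p-1\}$) after a last affine adjustment. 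A more self-contained route to the same rigidity uses the discrete Fourier transform over $\F_p^2$: having exactly three special directions is equivalent to $\widehat{\mathbbm{1}_S}$ being supported on the three frequency lines dual to $x,y,x+y$, hence to a decomposition $\mathbbm{1}_S(x,y)=u(x)+v(y)+w(x+y)$; the constraint $\mathbbm{1}_S\in\{0,1\}$ then forces each of $u,v,w$ to be affine in a lifted representative, which a short ``two consecutive values along anti-diagonals'' argument converts into the permutation structure.

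It remains to assemble the conclusion. The triangle $T=\{(x,y):\tilde x+\tilde y\le p-2\}$ has $|T|=0+1+\dots+(p-1)=\tfrac{p(p-1)}2$ and, by the count above, exactly the three special directions $x,y,x+y$; complementation sends $T$ to $T^c$ with $|T^c|=\tfrac{p(p+1)}2$ and preserves every difference of line-counts, hence all special directions. Thus every set with exactly $3$ special directions is $AGL(2,p)$-equivalent to $T$ or to $T^c$, and since these have different sizes the set of size $\tfrac{p(p-1)}2$ equidistributed in $p-2$ directions is unique up to $AGL(2,p)$, as claimed.
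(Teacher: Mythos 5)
Your framework coincides with the paper's own (Sections 3--4): the R\'edei polynomial, the identity $R_S(X,m)=(X^p-X)^k$ at equidistributed slopes, the vanishing $e_j\equiv 0$ for $1\le j\le p-3$ by root counting, the reduction to $|T|=\frac{p(p-1)}{2}$ via complementation, and the use of the triple transitivity of $PGL(2,p)$ to pin the three special directions. But the heart of the proof is exactly where your write-up stops being a proof: you say ``I would prove the rigidity statement\dots Granting this\dots'', and neither of the two routes you sketch is carried out. Asserting that the $\{0,1\}$-constraint forces $u,v,w$ to be ``affine in a lifted representative'', or that equidistribution in the remaining slopes ``forces these to interlock as the carry pattern of addition'', is a restatement of the theorem, not an argument for it.

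Worse, the rigidity statement you propose to prove --- that the intersection numbers in each special direction form a permutation of $\{0,1,\ldots,p-1\}$ --- is strictly weaker than what the argument needs. The paper extracts more from $e_j\equiv 0$: by Newton's identities the power sums $\sum_i a_i^{\,l}$ of the first coordinates vanish for $1\le l\le p-3$ (and for $l=0$, since $p$ divides $|S|$), so the vector $w=(w_j)$ of counts on vertical lines is orthogonal to the Vandermonde rows $(j^l)_j$ for $0\le l\le p-3$; since rows $i$ and $l$ of that matrix are orthogonal unless $i+l=p-1$, this forces $w_j=\alpha j+\beta$, i.e.\ the count is an \emph{affine-linear function of the line index} (this also yields the size dichotomy $\frac{p(p-1)}{2}$ versus $\frac{p(p+1)}{2}$). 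That linearity is what legitimizes the normalization step: the relabeling of columns and rows that sorts the counts into the orders $(0,1,\ldots,p-1)$ and $(p-1,\ldots,0)$ is then itself an element of $AGL(2,p)$, after which a Gale--Ryser-type recursion (empty column, hence a row with $p-1$ points, and so on) pins down $T=S$. If you know only that the counts form a permutation, the sorting map is an arbitrary permutation of coordinate labels, which is not affine, so your ``last affine adjustment'' has no justification: every set of the form $\{(i,j)\colon \sigma(i)+\tau(j)\ge p\}$ with $\sigma,\tau$ arbitrary permutations of $\{0,\ldots,p-1\}$ has the correct row and column counts, and ruling out the non-affine choices of $\sigma,\tau$ requires invoking the equidistributed directions --- precisely the step you leave open. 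The missing idea, then, is the Newton's-identities/Vandermonde-orthogonality argument that upgrades coefficient vanishing to linearity of the count functions; with it, the rest of the proof is short, and without it, your outline does not close.
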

This shows that for $k=\frac{p-1}{2}$ the result of Ghidelli is tight. A natural question arises here. Is it possible to construct sets of cardinality $kp$ which have $\lceil \frac{p+k+2}{k+1}\rceil$ special directions?

The paper is organised as follows.
In Section \ref{sec2} we describe sets having at most $2$ special directions.
Section \ref{sec3} is devoted to the analysis of the proof of Ghidelli in order to understand the possible ways of describing examples for his problems. Then in Section \ref{sec4} we describe sets having 3 special directions while in Section \ref{sec6} we present some examples for sets having 4 special directions. Section \ref{sec5} contains a reformulation of the problem. Finally, we raise some questions concerning the topic in Section \ref{sec7}.
\section{Two special directions}\label{sec2}
From now on, let $\mathbb{F}_p^2$ be identified with the set of pairs of integers $(a,b)$, where $a,b \in \{ 0,1, \ldots, p-1\}$.
Let us assume that $S$ is a subset of $\mathbb{F}_p^2$ of cardinality $kp$ which is equidistributed in $p-1$ directions. It has been proved by Fallon, Mayeli and Villano \cite{Tom} that in this case $S$ is the union of $k$ parallel lines.
This also means that a set having at most two special directions has at most one.
The original proof is short but uses techniques from Fourier analysis. We present a combinatorial argument for the statement.

Let us assume that $S$ is equidistributed along the lines $l(x)=ax+b$, where $a \in \mathbb{F}_p^*$, $b \in \mathbb{F}_p$.
We may assume that $(y,c) \in S$ and $(z,c) \not\in S$ for some $c \in \mathbb{F}_p$ and $y \ne z \in \mathbb{F}_p$.  If there is no such pair, then $S$ is the union of $k$ lines. We may assume $c=0$ and $z=0$ since $D(S)=D(S+t)$, where $t \in \mathbb{F}_p^2$.
Let $l_{\infty}^j= \{ (j,i) \mid i \in \mathbb{F}_p\}$ and $l_0=\{(x,0) \mid x \in \mathbb{F}_p\}$.

Now we count the cardinality of $S$ in three different ways.
First, $|S|=kp$. It is equal to the number of points contained on the lines containing $(0,0) \not\in S$. We obtain
\begin{equation}\label{eq1} kp=k(p-1)+(a_0+b_0), \end{equation}
where $a_0=|S \cap l_0|$ 
and $b_0=|S \cap l_{\infty}^0|$. 

On the other hand we may count the number of elements contained in the lines going through $(y,0) \in S$.
\begin{equation}\label{eq2} kp=1+ (k-1)(p-1)+(a_0-1)+(b_1-1), \end{equation}
where $b_1=|S \cap l_{\infty}^y|$.
It follows from equation \eqref{eq1} that $a_0+b_0=k$. In particular we get $a_0 \le k$. Equation \eqref{eq2} shows $k+p=a_0+b_1$. Plainly, $b_1 \le p$ and we have seen $a_0 \le k$ so $b_1 = p,a_0 = k$ and $b_0=0$. This shows that $l_{\infty}^y$ is contained in $S$. This holds for every $y \in \mathbb{F}_p$ with $(y,0) \in S$. Since $a_0=k$ we have $k$ such $y$. We have found the $kp$ elements of $S$ and hence $S$ is the union of parallel (vertical) lines.
\section{Ghidelli's proof}\label{sec3}
In this section we follow Ghidelli's proof to obtain some extra information about sets having few special directions.

Let $S \subseteq \mathbb{F}_p^2$ be a nonempty set of cardinality $np$. We define the R\'edei polynomial associated with $S$ as $$H_S(x,y)=\prod_{(a,b)\in S \subseteq \mathbb{F}_p^2} (x-ay+b).$$ One of the main properties of $H_S$ follows from the observation that \[ (x - ay + b) = (x - a'y + b') \Longleftrightarrow \frac{b-b'}{a - a'}
= y.\]
In other words, if $y=m$ is fixed, then
$am - b=c$ for a given $c\in \mathbb{F}_p$ holds for those points $(a,b)$ of $\mathbb{F}_p^2$ which are contained in a line whose slope is $m$. Therefore if we pick a set of representatives $\{(a_i,b_i) \mid i=0,1,\ldots, p-1\}$ of the class of parallel lines of slope $m$ (one point from each line), then $\prod_{i=0}^{p-1} x-a_im+b_i=x^p-x$.  It follows that if $S$ is equidistributed in the direction $m$, then $H_S(x,m)=(x^p-x)^n$.

We may write \[ H_S(x,y)=x^{|S|}+x^{np-1}g_1(y)+ x^{np-2}g_2(y)+ \ldots +g_{np},
\]
where $g_i \in \mathbb{F}_p[y]$ ($i=1, \ldots, np$).
It is easy to see that $g_i$ is of degree at most $i$ and it is equal to the $i$'th elementary symmetric polynomial\footnote{For each nonnegative integer $k$, the $k$'th elementary symmetric polynomial on $n$ variables is the sum of all distinct products of $k$ distinct variables. We denote it by $\sigma_k$.} $$g_i(y)=\sigma_i(a_1y-b_1,a_2y-b_2, \ldots , a_{np}y-b_{np})$$ of the polynomials  $a_jy-b_j$ ($j=1, \dots, np$).

Now assume that $S$ is equidistributed in $p-1 \ge$  directions. Then $g_i(y)$ has at least $k$ roots since the coefficients $x^{a}$, where $a>(n-1)p+1$ in the polynomial $(x^p-x)^n$ are $0$. Then we have $g_i\equiv 0$ if $i<k$ since the number of its roots is larger than its degree. By Newton's identities $\sum_{i=1}^{np}(a_iy+b_i)^l=0$ if $l<k$ and so as the leading coefficients of these polynomials $\sum_{i=0}^{np}a_i^l$, these expressions should also be $0$.

Let $w_j$ be the number of indices $i$ such that $a_i=j$. Then
\[ \sum_{i=0}^{np}a_i^l =\sum_{j=0}^{p-1}w_j j^l.
\]
This shows that the vector $w=(w_j)_{j=0,1,\ldots, p-1}$ is orthogonal to $(j^l)_{j=0,1,\ldots, p-1}$ in $\mathbb{F}_p^p$ for $l=1,\ldots, k$. Further $w \in \mathbb{F}_p^p$ is orthogonal to $(1)_{j=0,1,\ldots, p-1}$ since $|S|$ is divisible by $p$. Thus $w$ is orthogonal to the first $k$ rows of the Vandermonde matrix $M_{j,l}=(j^l)$ ($0 \le j,l \le p-1$ ).

It is not hard to see that the $i$'th and $j$'th rows of $M$ are orthogonal in $\mathbb{F}_p^p$ except if $i+j=p-1$. Thus we obtain that the orthogonal subspace for $\langle 1,x,x^2, \ldots, x^k \rangle$ is $\langle 1,x,x^2, \ldots, x^{p-1-1-k} \rangle$. As a corollary of this argument we obtain the following.
\begin{prp}
Let $w$ be the projection function associated with $S$ defined above. Assume $S$ has $k \ge 2$ special directions. Then $w$ as a function from $\F_p$  to $\F_p$ can be expressed as a polynomial of degree at most $k-2$.
\end{prp}
As a corollary of this argument we obtain again that sets having exactly 2 special directions do not exist since the projections are constant functions (after deleting lines) so every direction is equidistributed or the set is the union of parallel lines. Both cases contradict the fact that the set has 2 special directions.

Assume now that we have a set $S$, which is equidistributed in $p-2$ directions but it is not the union of lines. In this case $a_i$ is a function that is either constant or linear. We may write it as $\alpha i + \beta$.

The $\alpha=0$ case is realised when $S$ is the union of parallel lines.

In the case when $a_i=\alpha i +\beta$ with $\alpha \ne 0$, we obtain in particular that $|S| = 0+1+\ldots + p-1=\frac{p(p-1)}{2}$ or $|S| = 1+\ldots + p-1+p=\frac{p(p+1)}{2}$ since linear polynomials are permutation polynomials.

\section{Sets with 3 special directions}\label{sec4}
Using the result of the previous section we present a natural construction that fulfils the required conditions to answer Ghidelli's question in the negative. What is more, we prove that the sets (and their images of $AGL(2,p)$) described in this section are the ones having exactly 3 special directions.
We emphasise the fact that we think of the coordinates as elements of $\mathbb{Z}$, which gives us the opportunity to compare them. However, the additive and multiplicative operations are understood ($\bmod ~ p$).

According to the observations in the previous section it seems reasonable to try to understand the properties of the following set:
\[ S=\{(a,b) \in \mathbb{F}_p^2 \mid b<a \}. \]
$S$ has $\frac{p(p-1)}{2}$ elements. (See also Figure  \ref{fig:triangle}.)

\begin{figure}
\begin{center}
\begin{tikzpicture}
\matrix[matrix of nodes,nodes={draw=gray, anchor=center, minimum size=.5cm}, column sep=-\pgflinewidth, row sep=-\pgflinewidth] (A) at (-1,0) {
0 & 0 & 0 & 0 & 0 &0 & 0 & 0 & 0 & 0 &0 \\
0 & 0 & 0 & 0 & 0 &0 & 0 & 0 & 0 & 0 &1 \\
0 & 0 & 0 & 0 & 0 &0 & 0 & 0 & 0 & 1 &1 \\
0 & 0 & 0 & 0 & 0 &0 & 0 & 0 & 1 & 1 &1 \\
0 & 0 & 0 & 0 & 0 &0 & 0 & 1 & 1 & 1 &1 \\
0 & 0 & 0 & 0 & 0 &0 & 1 & 1 & 1 & 1 &1 \\
0 & 0 & 0 & 0 & 0 &1 & 1 & 1 & 1 & 1 &1 \\
0 & 0 & 0 & 0 & 1 &1 & 1 & 1 & 1 & 1 &1 \\
0 & 0 & 0 & 1 & 1 &1 & 1 & 1 & 1 & 1 &1 \\
0 & 0 & 1 & 1 & 1 &1 & 1 & 1 & 1 & 1 &1 \\
0 & 1 & 1 & 1 & 1 &1 & 1 & 1 & 1 & 1 &1 \\
};
\end{tikzpicture}
\end{center}
\caption{Set $S$ which has 3 special directions.}
\label{fig:triangle}
\end{figure}

Clearly, $S$ is not equidistributed in at least $3$ directions  since the lines having equation $ax+by=c$, where $(a,b,c) \in \mathbb{F}_p^3$ is either $(1,0,0)$, $(0,1,p-1)$ or $(1,-1,1)$ intersects $S$ in $p-1>\frac{p-1}{2}$ elements.



Let $L$ be a line containing the origin. We show that if the equation determining $L$ is $f_L(x)=ax$ with $ 2 \le a \le p-1$, then $|L \cap S|=\frac{p-1}{2}$. Clearly, $(0,0) \not\in S$ and if $i<ai$ for some $i \in \{1, \ldots, p-1 \}$, then $-i>a(-i)$ since $a \ne 0$. Note also that $i \ne ai$ since $i \ne 0$ and $a \ne 1$.

It remains to verify that if $|S\cap (L+i)|=\frac{p-1}{2}$, then $|S\cap (L+i+1)|=\frac{p-1}{2}$. Therefore, we show there is exactly one $j \in \{ 0, \ldots, p-1\}$ such that $f_L(j)+i >j$ and $f_L(j)+i+1 < j$. This happens when $f_L(j)+i=aj+i=p-1$ and $j \ne 0$. Since $a\ne 0$, if $i = p-1$ we would get $j =0$, which is excluded. If $i \ne p-1$, then there is a unique $j=\frac{p-1-i}{a}$ fulfilling the equation. Thus, there is a unique column, when the intersection of $S$ with the line $L+i$ increases by $1$, when we replace the line $L+i$ by $L+i+1$.

On the other hand, if $f_L(j)+i=j-1$ ($j \ne 0   $), then $(j,f_L(j)+i)$, which is an element of $L+i$ is in $S$ but  $(j,f_L(j)+i+1) \not\in S$. The solution of the equation $f_L(j)+i=aj+i=j-1$ is $j=-\frac{i+1}{a-1}$. Note that $a \ne 1$ so such a $j$ exists.

The case $j=0$ ($i=p-1$) can be handled similarly.
\begin{thm}
Let $T$ be a subset of $\mathbb{F}_p^2$ which is equidistributed in $p-2$ directions. Then $T=\alpha(S)$, if $|T|=\frac{p(p-1)}{2}$ and $T=\alpha'(S^c)$ if $|T|=\frac{p(p+1)}{2}$ for some $\alpha,\alpha' \in AGL(2,p)$, where $S^c$ is the complement of $S$ in $\mathbb{F}_p^2$.
\end{thm}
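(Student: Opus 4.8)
The plan is to reduce first to the smaller cardinality, then put the three special directions in standard position, read off the line counts, and finally reconstruct $T$. For the reduction, if $|T|=\frac{p(p+1)}{2}$ I would pass to the complement $T^c$, which has size $\frac{p(p-1)}{2}$. Since a line meeting $T$ in $c$ points meets $T^c$ in $p-c$ points, a pencil is equidistributed for $T$ exactly when it is for $T^c$; hence $T^c$ is equidistributed in the same $p-2$ directions. Granting the statement for $T^c$ gives $T^c=\alpha(S)$, and since affine maps commute with complementation, $T=\alpha(S)^c=\alpha(S^c)$, i.e. the claim with $\alpha'=\alpha$. So from now on $|T|=\frac{p(p-1)}{2}$.

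Next I would normalise the special directions. The induced action of $AGL(2,p)$ on the set of directions $PG(1,p)$ is the action of $PGL(2,p)$, which is $3$-transitive, so I can choose $\alpha\in AGL(2,p)$ carrying the three special directions of $T$ to $\{\infty,0,1\}$, the three special directions of $S$ computed above. Replacing $T$ by $\alpha(T)$, I may assume the vertical direction, the horizontal direction and the slope-$1$ direction are special, while the $p-2$ finite slopes $m\in\{2,\dots,p-1\}$ are equidistributed. Note that $T$ is not a union of parallel lines, since such a set has at most one special direction by Section \ref{sec2}.

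I would then read off the line counts via the analysis of Section \ref{sec3}. With the vertical direction special, the function $a\mapsto w_a:=|T\cap\{x_1=a\}|$ is affine-linear over $\mathbb{F}_p$ and, $T$ not being a union of lines, non-constant; its values are therefore a permutation of $\{0,1,\dots,p-1\}$, consistent with $|T|=\frac{p(p-1)}{2}$. Using the residual stabiliser of $\{\infty,0,1\}$, namely the maps $x\mapsto\lambda x+t$ (a scalar linear part fixes every direction, as do translations), I can reindex the columns so that $w_a=a$ for every $a$. Running the same argument after an affine permutation of the three special directions shows the horizontal and diagonal line counts are likewise permutations of $\{0,\dots,p-1\}$, so each of the three special pencils has a unique empty line, which I would place at $x_1=0$, $x_2=p-1$ and $x_1-x_2=0$.

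The main obstacle is the final reconstruction, i.e. showing these data force $T=S$. Here I would exploit equidistribution in the $p-2$ finite slopes through the R\'edei polynomial. Writing $H_T(x,y)=\sum_i g_i(y)x^{np-i}$, equidistribution gives $H_T(x,m)=(x^p-x)^n$ for each $m\in\{2,\dots,p-1\}$, so $g_i\equiv h_i\pmod{Q}$ coefficientwise, where $h_i$ is the constant coefficient of $x^{np-i}$ in $(x^p-x)^n$ and $Q(y)=\prod_{m=2}^{p-1}(y-m)$ has degree $p-2$. For $i\le p-3$ the bound $\deg g_i\le i$ forces $g_i=h_i$ exactly, and via Newton's identities (valid since the indices are $<p$) this is equivalent to $\sum_{(a,b)\in T}a^rb^s=0$ for all $1\le r+s\le p-3$; as $S$ satisfies the same relations, $T$ and $S$ agree in every moment of degree $\le p-3$, while the three fully known projections supply in addition all moments supported on $x_1$, on $x_2$, or on $x_1-x_2$. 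The crux is to promote this partial moment agreement to $\mathbf{1}_T=\mathbf{1}_S$: I expect to feed the higher coefficients $g_i$ with $i\ge p-2$, which are pinned down modulo $Q$ up to a few scalars, back into the incidence data and verify that the only $0/1$ configuration compatible with the three staircase boundaries is $S$ itself. An elementary alternative is a downward induction on the columns, using one fixed equidistributed slope to transport the known structure of column $a+1$ to column $a$ and thereby force each column to be the initial segment $\{0,\dots,a-1\}$. Either way, excluding all non-staircase configurations is the substantive step.
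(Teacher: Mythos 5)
Your reduction to $|T|=\frac{p(p-1)}{2}$ via complementation, the normalisation of the three special directions by the triple transitivity of $PGL(2,p)$, and the deduction from Section \ref{sec3} that the column counts form a non-constant affine-linear function of the column index (hence a permutation of $\{0,1,\dots,p-1\}$) all agree with the paper's proof. The genuine gap is the final reconstruction, and you concede it yourself: ``excluding all non-staircase configurations is the substantive step.'' Neither of your sketches closes it. The R\'edei/moment route only yields agreement of moments of degree at most $p-3$ together with three one-dimensional projections, and you give no argument that this forces $\mathbf{1}_T=\mathbf{1}_S$; the ``downward induction transporting column $a+1$ to column $a$ along an equidistributed slope'' is a single sentence with no mechanism. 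As written, the theorem is not proved.

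What blocks you is a self-imposed constraint: you normalise only inside the stabiliser of all three directions (scalar linear part plus translations). That group has too few parameters: after forcing $w_a=a$ you have only a translation in the second coordinate left, so you can place the empty row but you can neither prescribe the slope of the row-count function nor touch the diagonal pencil --- in particular your claim that the three empty lines can be placed at $x_1=0$, $x_2=p-1$ and $x_1-x_2=0$ is itself unjustified. The paper's trick is to give up the third direction during normalisation: maps $(x,y)\mapsto(\lambda x+t,\mu y+s)$ with \emph{independent} $\lambda,\mu$ preserve the vertical and horizontal pencils (while moving the diagonal one), and with them one can simultaneously arrange the column counts to be $(0,1,\dots,p-1)$ and the row counts to be $(p-1,p-2,\dots,1,0)$ in this order. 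Then a short recursion finishes without any further use of equidistribution: column $0$ is empty, so row $0$, having $p-1$ points, must be $\{(a,0):a\neq 0\}$; column $1$, having exactly one point, must be $\{(1,0)\}$; inductively row $j$ is forced to be $\{(a,j):a>j\}$ and column $j+1$ to be $\{(j+1,y):y\le j\}$, because at each step the prescribed count equals the number of points not yet excluded (respectively already forced). This uses only the two count sequences --- no higher R\'edei coefficients, no moment comparison --- and yields $T=S$, which a posteriori restores the diagonal as the third special direction.
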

\begin{proof}
We have seen that $|T|=\frac{p(p-1)}{2} \mbox{ or } \frac{p(p+1)}{2}$. As the complement of a set of size $\frac{p(p-1)}{2}$ is of size $\frac{p(p+1)}{2}$ and vice-versa, it is enough to prove the statement for the case $|T|=\frac{p(p-1)}{2}$.

Since $PGL(2,p)$ acts triply transitively on the elements of the projective line we may assume that the three special directions are $(1,0),(0,1),(1,1)$. Moreover, it follows from the argument in Section \ref{sec3} that the set of intersections of $T$ with the horizontal lines is $\{0,1, \ldots, p-1 \}$ (since $|T|=\frac{p(p-1)}{2}$) and the same holds for the vertical lines. Moreover using a suitable affine transformations  along the axis we may assume that the order can be chosen to be $(0,1, \ldots, p-1 )$ along the vertical and $( p-1,p-2, \ldots, 1,0 )$ along the horizontal lines, respectively.

This shows that the first column does not contain any element of $T$ and since its first line contains $p-1$ elements we have $\{(0,i ) \mid 0<i\le p-1\} \subseteq T$. Using the same argument recursively one can prove that $T=S$.
\end{proof}

\section{Weighted sum of lines}\label{sec5}
The number of special directions of the set $S$ coincides with the number of non-Fourier roots of the characteristic function of $S$. This allows us to give a construction of sets with a given number of special directions. These sets are obtained as a linear combination of characteristic functions of lines (determining the special directions of the set) with rational coefficients.
It is easy to see that if $S$ is the weighted sum of lines of $k$ directions, then $S$ is equidistributed in every direction not determined by any line appearing in the sum.

One further aim is to present an alternative proof for the results of Section \ref{sec2} and Section \ref{sec4} using the following proposition.
We say that a function $f \colon \mathbb{F}_p^2 \to \C $ is equidistributed in a direction $d$ if the sum of the values of $f$ along the lines parallel to $d$ is constant.

\begin{prp}\label{prp frenkl}
Let $f \colon \mathbb{F}_p^2 \to \Q $ be a function. Assume $f$ is equidistributed in all but the following directions $d_1, \ldots , d_k$ ($k \ge 1$). Then $f$ can be written as the weighted sum of lines with rational weights: $$f=\sum_{j=1}^k\sum_{i=0}^{p-1} c_{j,i} 1_{l_{j,i}}, $$ where $c_{j,i} \in \Q$ and $l_{j,i}$ are lines determined by direction $d_j$ and for every $j\in \{1, \dots, k\}$ there is $i\in\{0, \dots, p-1\}$ $c_{j,i}\ne 0$. 
\end{prp}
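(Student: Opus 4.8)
The plan is to translate both equidistribution and the line indicators into the language of the discrete Fourier transform on $\mathbb{F}_p^2$, where the required decomposition becomes transparent, and only afterwards to descend from $\C$ to $\Q$. Write $\omega=e^{2\pi i/p}$ and, for $\xi\in\mathbb{F}_p^2$, let $\chi_\xi(x)=\omega^{\langle\xi,x\rangle}$ be the additive characters, with $\langle\cdot,\cdot\rangle$ the standard bilinear form. Each direction $d$ corresponds to a one-dimensional subspace $V_d$ (the common direction of its parallel lines), and the nonzero vectors of $\mathbb{F}_p^2$ are partitioned by the $p+1$ punctured orthogonal complements $V_d^{\perp}\setminus\{0\}$, each of size $p-1$.

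First I would record the two basic dictionary entries, each proved by a one-line character-sum computation over a coset. A function $f$ is equidistributed in the direction $d$ if and only if $\hat f(\xi)=0$ for every nonzero $\xi\in V_d^{\perp}$; and the indicator of any line of direction $d$ has Fourier transform supported on $V_d^{\perp}$. Consequently the span $\mathrm{Lines}(d)$ of the $p$ indicators $1_{l_{d,i}}$ equals the space of functions whose Fourier support lies in $V_d^{\perp}$, equivalently the functions constant along the lines of direction $d$; since the $p$ parallel lines have disjoint supports this span has dimension $p$ and contains the constant function $\mathbf 1=\sum_i 1_{l_{d,i}}$.

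Next I would assemble the global picture. Grouping characters by direction gives $\C^{\mathbb{F}_p^2}=\C\mathbf 1\oplus\bigoplus_{d}U_d$, where $U_d$ is spanned by the $\chi_\xi$ with $\xi\in V_d^{\perp}\setminus\{0\}$ and $d$ runs over all $p+1$ directions. In this notation $\mathrm{Lines}(d)=\C\mathbf 1\oplus U_d$, and the hypothesis that $f$ is equidistributed outside $d_1,\dots,d_k$ says exactly that the $U_d$-component of $f$ vanishes for $d\notin\{d_1,\dots,d_k\}$, i.e. $f\in\C\mathbf 1\oplus\bigoplus_{j=1}^k U_{d_j}=\sum_{j=1}^k\mathrm{Lines}(d_j)$. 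Splitting off the constant component—here the assumption $k\ge 1$ is used, so that at least one direction can absorb it—exhibits $f$ over $\C$ as a sum of members of the $\mathrm{Lines}(d_j)$, hence as a combination of line indicators.

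The main work is to force the coefficients to be rational. I would do this in the cleanest available way: $\sum_j\mathrm{Lines}(d_j)$ is the complexification of the $\Q$-subspace spanned by the (rational, indeed $0/1$) line indicators, so a $\Q$-valued $f$ lying in it over $\C$ already lies in it over $\Q$, since solving the underlying $\Q$-linear system forces the coefficients into $\Q$. Alternatively, inverting the discrete Fourier transform one direction at a time yields the explicit formula $c_{j,i}=\tfrac1{p^2}\bigl(\gamma_j-\sum_x f(x)+p\sum_{x\in l_{j,i}}f(x)\bigr)$, with $\gamma_j\in\Q$ chosen so that $\sum_j\gamma_j=\sum_x f(x)$; this is manifestly rational because each line-sum $\sum_{x\in l_{j,i}}f(x)$ is. Either route also settles the last clause: as $d_j$ is genuinely special the $U_{d_j}$-component of $f$ is nonzero, so the member of $\mathrm{Lines}(d_j)$ realizing it cannot vanish and some $c_{j,i}\ne 0$. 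I expect the rationality descent, together with the bookkeeping of the constant frequency $\xi=0$ that is shared by all directions (and hence responsible for the non-uniqueness of the decomposition), to be the only genuine obstacle; the Fourier dictionary itself is routine.
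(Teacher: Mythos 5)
Your proposal is correct, but it takes a genuinely different route from the paper's. The paper argues by induction on $k$: it subtracts from $f$ the function $g_1$ obtained by averaging $f$ along the lines of direction $d_1$ (exactly your projection onto $\mathrm{Lines}(d_1)$), observes that $f-g_1$ is equidistributed in all but the $k-1$ directions $d_2,\ldots,d_k$, and invokes Fourier analysis only in the base case $k=1$, to see that a function all of whose line sums vanish is identically zero. In that scheme rationality of the weights is automatic, since each weight is a line sum of the current (rational-valued) function divided by $p$. You instead work globally: you decompose the whole function space as $\C\mathbf 1\oplus\bigoplus_d U_d$, read the hypothesis as a Fourier-support condition, identify $\mathrm{Lines}(d_j)=\C\mathbf 1\oplus U_{d_j}$, and then need a separate step to descend from $\C$ to $\Q$ --- which you handle correctly, both by the rank/field-descent argument and by your explicit formula (which does sum to $f$: writing $\sigma=\sum_x f(x)$, one has $f=\frac{\sigma}{p^2}\mathbf 1+\sum_j P_{U_{d_j}}f$, and your choice $\sum_j\gamma_j=\sigma$ makes the constant terms balance). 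Your route buys a cleaner structural picture: the non-uniqueness of the decomposition is exhibited as exactly the shared zero frequency, and your argument that each special direction must receive a nonzero coefficient applies to \emph{every} valid decomposition, not just the constructed one (the $U_{d_j}$-component of $f$ is nonzero and can only be supplied by the $j$-th group of lines). The paper's induction buys brevity and elementarity: Fourier analysis enters only once, and the descent lemma is never needed because the coefficients are manifestly rational at each stage.
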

\begin{proof}
We proceed by induction. Let $w_1$ be a function defined on the $\langle d_1 \rangle$-cosets such that $w_1$ takes the sum of the values of $f$ for the elements on each $\langle d_1 \rangle$-coset. Let $g_1(x)$ be a function of $\mathbb{F}_p^2$ defined as $g_1(x)=\frac{w_1(C)}{p}$, where $C$ is the $\langle d_1 \rangle$-coset containing $x$.
Since $f$ is not equidistributed in direction $d_1$, function $g_1(x)$ is not constant.

Clearly, $f_1:=f-g_1$ is a function equidistributed in all but $k-1$ directions.
If $k=1$, then $f_1$ is equdistributed in every direction and the sums along every line is zero. Then we claim that $f_1$ is zero. This can be seen from the fact that the Fourier transform of $f_1$ vanishes on every character.
Hence $f=g_1$, so it is of the form $\sum_{i=0}^{p-1} c_{1,i} 1_{l_{1,i}}$, where $l_{1,i}$ are determined by direction $d_j$ and $c_{1,i'}\ne 0$ for some $i'\in \{0,\dots, p-1\}$, since $g_1$ is not constant.

For $k\ne 1$ we get the statement for $f=f_1+g$ by using the inductive hypothesis for $f_1$.
\end{proof}
\begin{itemize}
\item
In particular we obtain the following explicit formula for the set discussed in the previous Section \ref{sec4}.

Let $\frac{c}{p}$ be the weight of lines defined by the equation $x=c$ and $\frac{-c}{p}$ the one of $y=c$. Further let $\frac{c}{p}$ be the weight of the vertical lines $y=x+c$. Now if $a<b$ we obtain that the sum of weight of these lines in this region is 1 and it is zero everywhere else.
\item
As a corollary of Proposition \ref{prp frenkl} one can see that there is no subset of $\mathbb{F}_p^2$, which is not equidistributed in exactly $2$ directions (see also Section \ref{sec2}). Similarly, one can also show the following Lam-Leung type result \cite{LL2000}, which is formulated for $\mathbb{F}_p \times \mathbb{F}_q$, where $p$ and $q$ are different primes. Let $S$ be a multiset, i.e, each value of the characteristic function of $S$ is a nonnegative integer, and suppose that there are at most two special directions of $S$. Then $S$ is a sum of weighted lines with nonnegative integer coefficients. The proofs of these results are analogous to the proof of Proposition 3.8 in \cite{KMSV}.
\end{itemize}




\section{Examples for four special directions}\label{sec6}
In this section we will try to find sets of smallest possible cardinality, having exactly $4$ special directions. 
For small prime $p\le 11$ we construct such sets of minimal cardinality according to Ghidelli's lower bound \cite{ghidelli}.

For a matrix $M$ let $M^{(k)}$ denote the matrix defined by $M^{(k)}(i,j)=M(i,j-k)$. Let $\underline{1}$ denote the all 1 row vector and let $e_i$ denote the vector which is 1 at its $i'th$ coordinate and zero everywhere else. Let  $L_j(v):=\sum _{i=0}^{p-2} \frac{p-i-1}{p}  v^{(ij)}$.
\begin{lem}\label{lem6.1}
\begin{enumerate}
    \item
Let $p$ be a prime and
let $v_j \in \mathbb{R}^{1 \times  \{0,1, \ldots, p-1 \}}$ be a row vector whose first coordinate is $1$ and $j+1$'th coordinate is $-1$.
Then \[ \frac{1}{p}\underline{1}+L_j(v_j):=\frac{1}{p}\underline{1} +\sum _{i=0}^{p-2} \frac{p-i-1}{p}  v_j^{(ij)}=e_1. \]
\item\label{refitem2}
 Let $k \in \mathbb{F}_p \setminus \{0\}$ with $k l \equiv j \pmod{p}$. Then \[ \frac{l}{p}\underline{1} +L_k(v_j):=\frac{l}{p}\underline{1} +\sum _{i=0}^{p-2} \frac{p-i-1}{p} v_j^{(ik)}=\sum_{a=0}^{l-1}e_1^{(ak)}. \]
\end{enumerate}
\end{lem}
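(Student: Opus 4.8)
The plan is to translate the statement into an identity in the group ring $\R[\Z/p\Z]\cong\R[h]/(h^{p}-1)$. Identify a row vector in $\R^{1\times\{0,\dots,p-1\}}$ with an element of this ring by sending the basis vector supported at position $m$ to $g^{m}$, where $g$ denotes the image of the shift-by-one map; then the cyclic shift $v^{(m)}$ corresponds to $g^{m}v$, the all-ones vector $\underline 1$ to $1+g+\dots+g^{p-1}$, the vector $e_1$ to $1$, and hence $e_1^{(ak)}$ to $g^{ak}$. The vector $v_j$, having $+1$ at position $0$ and $-1$ at position $j$, corresponds to $1-g^{j}$. Part (1) is exactly the case $l=1$ of (2) (there $kl\equiv j$ forces $k=j$), so it suffices to treat (2). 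Writing $j\equiv kl\pmod p$ and setting $h:=g^{k}$, which is again a generator since $k\neq0$, the left-hand side of (2) becomes
\[
\frac{l}{p}\sum_{r=0}^{p-1}h^{r}+(1-h^{l})\sum_{i=0}^{p-2}\frac{p-1-i}{p}\,h^{i},
\]
while the right-hand side becomes $\sum_{a=0}^{l-1}h^{a}$. The whole lemma thus reduces to this single identity in $\R[h]/(h^{p}-1)$.

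To prove the identity I would first record the small but decisive observation that the weight $\tfrac{p-1-i}{p}$ already vanishes at $i=p-1$, so the truncated sum may be replaced by the full sum $Q(h):=\sum_{i=0}^{p-1}\tfrac{p-1-i}{p}h^{i}$ without changing its value. The factor $1-h^{l}$ then turns $(1-h^{l})Q(h)=Q(h)-h^{l}Q(h)$ into a telescoping difference once exponents are reduced modulo $p$. Extracting the coefficient of $h^{r}$ for each $r\in\{0,\dots,p-1\}$, the left-hand side has coefficient
\[
\frac{l}{p}+\frac{p-1-r}{p}-\frac{p-1-\big((r-l)\bmod p\big)}{p},
\]
and I would finish by splitting into the cases $r\ge l$ and $r<l$, in which $(r-l)\bmod p$ equals $r-l$ and $r-l+p$ respectively. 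A one-line cancellation yields coefficient $0$ in the first case and $1$ in the second, matching the coefficients of $\sum_{a=0}^{l-1}h^{a}$ and completing the proof.

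Conceptually the entire content sits in the factorisation of $v_j$ as $1-h^{l}$, which converts $L_k(v_j)$ into a discrete difference; after that only bookkeeping remains. The single delicate point, and the one I expect to be the main obstacle, is the interaction between the truncation $i\le p-2$ and the cyclic identification $h^{p}=1$: the \emph{top} of the telescope and the wrap-around term are not a priori aligned, and it is precisely the uniform constant $\tfrac{l}{p}\underline 1$ that absorbs the wrap-around so that the net coefficients form a clean $0/1$ indicator supported on $\{0,k,2k,\dots,(l-1)k\}$. Noticing that the weight vanishes at $i=p-1$ is what makes this alignment automatic. If one prefers to avoid the group ring altogether, the same result follows by induction on $l$, with part (1) as the base case and the step from $l$ to $l+1$ adding a single shifted copy of $e_1$, checked directly against the definition of $L_k$ using its shift-equivariance.
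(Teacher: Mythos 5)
Your proof is correct, but it is organised in the opposite direction from the paper's. The paper proves part (1) first --- dismissing it as an ``easy calculation'' --- and then deduces part (2) from part (1): it observes the telescoping decomposition $v_j=\sum_{a=0}^{l-1}v_k^{(ak)}$ (the $-1$ of each summand cancels the $+1$ of the next, leaving $+1$ at position $0$ and $-1$ at position $lk\equiv j$), and then applies additivity of $L_k$, the shift-equivariance $L_k(v^{(m)})=L_k(v)^{(m)}$, and the shift-invariance of $\underline{1}$, so that $\frac{l}{p}\underline{1}+L_k(v_j)=\sum_{a=0}^{l-1}\bigl(\frac{1}{p}\underline{1}+L_k(v_k)\bigr)^{(ak)}=\sum_{a=0}^{l-1}e_1^{(ak)}$. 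You instead prove part (2) directly by a coefficient computation in $\mathbb{R}[h]/(h^p-1)$ and recover part (1) as the special case $l=1$, $k=j$. The two arguments exploit the same structure --- your factorisation $v_j\mapsto 1-h^l$ with $h=g^k$ is exactly the group-ring shadow of the paper's decomposition of $v_j$ into shifted copies of $v_k$ --- but the emphasis differs: the paper's route is modular and confines all computation to the $l=1$ case (which it then never writes out), whereas your route carries out the full computation explicitly, so it simultaneously supplies the ``easy calculation'' that the paper omits; your observation that the weight $\frac{p-1-i}{p}$ vanishes at $i=p-1$, letting the truncated sum close up cyclically, is precisely that verification, and your case split $r\ge l$ versus $r<l$ checks out. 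Your closing remark about an induction on $l$ based on part (1) and shift-equivariance is, in substance, the paper's proof.
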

\begin{proof}
\begin{enumerate}
    \item Easy calculation gives the result.
    \item It is easy to see that  $v_j=\sum_{a=0}^{l-1} v_k^{(ak)}$. The result follows from the fact that $L$ is additive and $L_k(v^{(m)})=L_k(v)^{(m)}$.
\end{enumerate}
\end{proof}
The way we are going to use this lemma is the following. We construct $\{\pm 1,0\}$ valued matrices whose row sums are all $0$. The previous lemma will be used simultaneously for the rows of these matrices.

We fix a $k \in \mathbb{F}_p \setminus \{0\}$ and we apply Lemma \ref{lem6.1} \eqref{refitem2}. Lemma \ref{lem6.1} treats
$\{\pm 1,0\}$ valued rows, which contain exactly one $1$'s and $-1$'s but since $L_k$  are linear operators we may apply it for some of these types of matrices. Now if we write a  $\{\pm 1,0\}$ valued row vector $v$, whose row sum is zero as the sum of $\{\pm 1,0\}$ valued row vectors ($v=\sum_{a=1}^c u_a$) with one $1$ and $-1$ entry, then $L_k(v)= L_k(\sum_{a=1}^c u_a)$.

Now for each $1 \le a \le c$ there is a $k_a$ such that $u_a=v_{i_a}^{(k_a)}$ for some $1 \le i_a-1,k_a \le p-1$. Finally, let $l_a k \equiv i_a \pmod{p}$. Then it follows from the previous conversation and Lemma \ref{lem6.1} that $L_k(u_a)$ will be a nonnegative integer valued vectors such that $\underline{1}^tL_k(u_a)=\sum_{a=1}^c l_a$.

The 4 directions used in the remainder of the section are $(1,0)$, $(0,1)$, $(1,1)$, $(1,-1)$. We build up sets, which are not equidistributed in these directions only.
It is clear that the sets presented in Figure \ref{Fig:triangles} can be constructed using weighted sums of lines in these directions.

\begin{figure}
\begin{center}
\begin{tikzpicture}
\matrix[matrix of nodes,nodes={draw=gray, anchor=center, minimum size=.5cm}, column sep=-\pgflinewidth, row sep=-\pgflinewidth] (A) at (-1,0) {
0 & 0 & 0 & 0 & 0 &0 & 0 & 0 & 0 & 0 &0 \\
0 & 0 & 0 & 0 & 0 &0 & 0 & 0 & 0 & 0 &1 \\
0 & 0 & 0 & 0 & 0 &0 & 0 & 0 & 0 & 1 &1 \\
0 & 0 & 0 & 0 & 0 &0 & 0 & 0 & 1 & 1 &1 \\
0 & 0 & 0 & 0 & 0 &0 & 0 & 1 & 1 & 1 &1 \\
0 & 0 & 0 & 0 & 0 &0 & 1 & 1 & 1 & 1 &1 \\
0 & 0 & 0 & 0 & 0 &1 & 1 & 1 & 1 & 1 &1 \\
0 & 0 & 0 & 0 & 1 &1 & 1 & 1 & 1 & 1 &1 \\
0 & 0 & 0 & 1 & 1 &1 & 1 & 1 & 1 & 1 &1 \\
0 & 0 & 1 & 1 & 1 &1 & 1 & 1 & 1 & 1 &1 \\
0 & 1 & 1 & 1 & 1 &1 & 1 & 1 & 1 & 1 &1 \\
};
\matrix[matrix of nodes,nodes={draw=gray, anchor=center, minimum size=.5cm}, column sep=-\pgflinewidth, row sep=-\pgflinewidth] (B) at (6,0)
{0 & 0 & 0 & 0 & 0 &0 & 0 & 0 & 0 & 0 &0 \\
0 & 1 & 0 & 0 & 0 &0 & 0 & 0 & 0 & 0 &0 \\
0 & 1 & 1 & 0 & 0 &0 & 0 & 0 & 0 & 0 &0 \\
0 & 1 & 1 & 1 & 0 &0 & 0 & 0 & 0 & 0 &0 \\
0 & 1 & 1 & 1 & 1 &0 & 0 & 0 & 0 & 0 &0 \\
0 & 1 & 1 & 1 & 1 &1 & 0 & 0 & 0 & 0 &0 \\
0 & 1 & 1 & 1 & 1 &1 & 1 & 0 & 0 & 0 &0 \\
0 & 1 & 1 & 1 & 1 &1 & 1 & 1 & 0 & 0 &0 \\
0 & 1 & 1 & 1 & 1 &1 & 1 & 1 & 1 & 0 &0 \\
0 & 1 & 1 & 1 & 1 &1 & 1 & 1 & 1 & 1 &0 \\
0 & 1 & 1 & 1 & 1 &1 & 1 & 1 & 1 & 1 &1 \\
};
\end{tikzpicture}
\end{center}
\caption{Two triangular sets with non-equidistributed directions (0,1), (1,0), (1,1) and (0,1), (1,0), (1,-1), respectively. }
\label{Fig:triangles}
\end{figure}


Now the difference of these two sets is also a weighted sum of suitable lines. This is presented in Figure \ref{fig:M11} and we denote it by $M_{11}$.

\begin{figure}
\begin{center}
\begin{tikzpicture}
\matrix[matrix of nodes,nodes={draw=gray, anchor=center, minimum size=.5cm}, column sep=-\pgflinewidth, row sep=-\pgflinewidth] (A) {
0 & 0 & 0 & 0 & 0 &0 & 0 & 0 & 0 & 0 &0 \\
0 & -1 & 0 & 0 & 0 &0 & 0 & 0 & 0 & 0 &1 \\
0 & -1 & -1 & 0 & 0 &0 & 0 & 0 & 0 & 1 &1 \\
0 & -1 & -1 & -1 & 0 &0 & 0 & 0 & 1 & 1 &1 \\
0 & -1 & -1 & 1 & -1 &0 & 0 & 1 & 1 & 1 &1 \\
0 & -1 & -1 & -1 & 1 &-1 & 1 & 1 & 1 & 1 &1 \\
0 & -1 & -1 & -1 & -1 &0 & 0 & 1 & 1 & 1 &1 \\
0 & -1 & -1 & -1 & 0 &0 & 0 & 0 & 1& 1 &1 \\
0 & -1 & -1 & 0 & 0 &0 & 0 & 0 & 0 & 1 &1 \\
0 & -1 & 0 & 0 & 0 &0 & 0 & 0 & 0 & 0 &1 \\
0 & 0 & 0 & 0 & 0 &0 & 0 & 0 & 0 & 0 &0 \\
};
\end{tikzpicture}
\end{center}
\caption{$M_{11}$.}
\label{fig:M11}
\end{figure}

Now let $N_{11}=M_{11}+M_{11}^{(5)}+C_{11}$, where $C_{11}$ is the matrix whose entries are all 0 except in the second and seventh columns which are constant $1$ and the fifth and last columns, which are constant -1.
An easy calculation shows that $N_{11}$ is the following matrix.

Now we apply the operator $L_{-2}$ simultaneously for the rows of $N_{11}$. Note that this can be realised as the sum of lines of the chosen directions.
One essential thing is that for those rows which contain more than one $1$'s (and $-1$'s) we have to find a pairing of these elements, which is indicated with colours.

We obtain the following $\{0,1\}$ matrix, which corresponds to the set we were looking for.

\begin{figure}
\begin{center}
\begin{tikzpicture}
\matrix[matrix of nodes,nodes={draw=gray, anchor=center, minimum size=.5cm}, column sep=-\pgflinewidth, row sep=-\pgflinewidth] (A) {
0 & \textcolor{red}{1} & 0 & 0 & \textcolor{blue}{-1} & 0 & \textcolor{blue}{1}  & 0 & 0 & 0 &\textcolor{red}{-1}
\\
0 & 0 & 0 & 0 & 0 &0 & 0 & 0 & 0 & 0 &0
\\
0 & 0 & \textcolor{red}{-1} & \textcolor{red}{1} & 0 &0 & 0 & \textcolor{blue}{-1} & 0 & \textcolor{blue}{1} &0
\\
0 & 0 & 0 & 0 & 0 &0 & 0 & -1 & 0& 1 &0
\\
0 & 1 & 0 & 0 & -1 &0 & 0 & 0 & 0 & 0 &0
\\
\textcolor{green}{1} & \textcolor{red}{1} & 0 & 0 & \textcolor{blue}{-1} & \textcolor{green}{-1} & \textcolor{blue}{1} & 0 & 0 & 0 &\textcolor{red}{-1}
\\
0 & 1 & 0 & 0 & -1 &0 & 0 & 0 & 0 & 0 &0
\\
0 & 0 & 0 & 0 & 0 &0 & 0 & -1 & 0& 1 &0
\\
0 & 0 & \textcolor{red}{-1} & \textcolor{red}{1} & 0 &0 & 0 & \textcolor{blue}{-1} & 0 & \textcolor{blue}{1} &0
\\
0 & 0 & 0 & 0 & 0 &0 & 0 & 0 & 0 & 0 &0
\\
0 & \textcolor{red}{1} & 0 & 0 & \textcolor{blue}{-1} & 0 & \textcolor{blue}{1}  & 0 & 0 & 0 &\textcolor{red}{-1}

\\
};

\matrix[matrix of nodes,nodes={draw=gray, anchor=center, minimum size=.5cm}, column sep=-\pgflinewidth, row sep=-\pgflinewidth] (B) at (7,0){
0 & 1 & 0 & 0 & 0 & 0 & 1 & 0 & 0 & 0 &0 \\
0 & 0 & 0 & 0 & 0 &0 & 0 & 0 & 0 & 0 &0 \\
0 & 1 & 0 & 1 & 1 &0 & 1 & 0 & 1 & 1 &1 \\
0 & 0 & 0 & 0 & 0 &0 & 0 & 0 & 0& 1 &0 \\
0 & 1 & 0 & 0 & 0 &0 & 1 & 0 & 1 & 0 &1 \\
1 & 1 & 0 & 0 & 0 & 0 & 1 & 1 & 0 & 1 &0 \\
0 & 1 & 0 & 0 & 0 &0 & 1 & 0 & 1 & 0 &1 \\
0 & 0 & 0 & 0 & 0 &0 & 0 & 0 & 0& 1 &0 \\
0 & 1 & 0 & 1 & 1 &0 & 1 & 0 & 1 & 1 &1 \\
0 & 0 & 0 & 0 & 0 &0 & 0 & 0 & 0 & 0 &0 \\
0 & 1 & 0 & 0 & 0 & 0 & 1 & 0 & 0 & 0 &0 \\
};
\end{tikzpicture}
\end{center}
\caption{The coloured pairs of 1's and -1's in $N_{11}$ on the left, and the corresponding set given by the process on the right.}
\end{figure}


A similar algorithm gives us the following sets for $p=5,7$ and $13$.
\begin{figure}
\begin{center}
\begin{tikzpicture}
\matrix[matrix of nodes,nodes={draw=gray, anchor=center, minimum size=.6cm}, column sep=-\pgflinewidth, row sep=-\pgflinewidth] (A) {
0 & 1 & 1 & 0 & 0 & 0 & 0 \\
0 & 1 & 1 & 0 & 0 & 1 & 0 \\
0 & 0 & 0 & 0 & 0 & 0 & 0 \\
0 & 0 & 0 & 0 & 0 & 0 & 0 \\
0 & 1 & 1 & 0 & 0 & 1 & 0 \\
0 & 1 & 1 & 0 & 0 & 0 & 0 \\
1 & 1 & 1 & 1 & 0 & 0 & 0 \\
};

\matrix[matrix of nodes,nodes={draw=gray, anchor=center, minimum size=.6cm}, column sep=-\pgflinewidth, row sep=-\pgflinewidth] (B) at (5,0) {
0 & 0 & 0 & 0 & 0 \\
0 & 0 & 1 & 0 & 0  \\
0 & 1 & 1 & 1 & 0  \\
0 & 0 & 1 & 0 & 0  \\
0 & 0 & 0 & 0 & 0 \\
};
\end{tikzpicture}
\end{center}
\caption{Examples for sets of smallest cardinality that have 4 special directions for $p=7$ (on the left) and $p=5$ (on the right).}
\end{figure}

Note that if we fix the prime $p$ and the number of special directions $n$, then Ghidelli's result gives us a lower bound for the subsets of $\mathbb{F}_p^2$ having exactly $k$ special directions. The previous examples for $p=5,7,11$ meet this lower bound. However this is not the case for the next example $p=13$, which seems to be optimal using this method.

The original method only gives us a multiset, where the sum of the weights is 65, which can easily be modified by subtracting $\underline{1}$ from those lines which contain weight 2 as well and adding  $\underline{1}^t$ to those columns which are currently empty. This does not modify the sum of the values but makes the following matrix to a $\{0,1 \}$-matrix.

\begin{figure}
\begin{center}
\begin{tikzpicture}
\matrix[matrix of nodes,nodes={draw=gray, anchor=center, minimum size=.5cm}, column sep=-\pgflinewidth, row sep=-\pgflinewidth] (A) {
1 & 1 & 0 & 0 & 1 &0 & 0 & 1 & 1 & 0 & 0 & 0 & 0 \\

1 & 1 & 0 & 1 & 1 &1 & 0 & 1 & 1 & 1 &0 &0 &1\\

0 & 0 & 0 & 0 & 1 &0 & 0 & 0 & 0 & 0 & 0 & 0 & 0 \\

0 & 1 & 0 & 1 & 1 &1 & 0 & 1 & 0 & 0 & 1 & 1 & 0 \\

0 & 0 & 0 & 0 & 1 &0 & 0 & 0 & 0 & 0 & 0 & 0 & 0 \\

1 & 1 & 0 & 1 & 1 &1 & 0 & 1 & 1 & 1 &0 &0 &1\\

1 & 1 & 0 & 0 & 1 &0 & 0 & 1 & 1 & 0 & 0 & 0 & 0 \\

0 & 1 & 0 & 0 & 0 &0 & 0 & 1 & 0 & 0 & 0 & 0 & 0 \\

0 & 0 & 0 & 0 & 0 &0 & 0 & 0 & 0 & 0 & 0 & 0 & 0 \\

1 & 1 & 0 & 1 & 1 &2 & 0 & 1 & 1 & 1 & 1 & 1 & 1 \\

1 & 1 & 0 & 1 & 1 &2 & 0 & 1 & 1 & 1 & 1 & 1 & 1 \\

0 & 1 & 0 & 0 & 0 &0 & 0 & 1 & 0 & 0 & 0 & 0 & 0 \\
};
\end{tikzpicture}
\end{center}
\caption{Example of 65-element set with 4 special directions in $\mathbb{F}_{13}^2$.}
\end{figure}
The question remains whether there exists $4*13=52$-element subset of $\mathbb{F}_{13}^2$ determining exactly $4$ special directions.
\section{Open problems}\label{sec7}
\begin{enumerate}
    \item
Is there a set in $\mathbb{F}_p^2$ that is equidistributed in exactly $d$ directions for every $d \le p-2$? We have seen that this is not the case for $d=p-1$ since sets which are equidistributed in $p-1$ directions are unions of parallel lines so these are equidistributed in $p$ directions at least.

It follows from the result of R\'edei \cite{redei} that there is a gap in the possible number of special directions for subsets of $\mathbb{F}_p^2$ of cardinality $p$. Further, the result of G\'acs \cite{gacs} shows that this is not a unique gap since sets (of cardinality $p$ having more than $\frac{p+3}{2}$ special directions determine at least $\lfloor 2 \frac{p-1}{3}+1\rfloor$ special directions.

However, it is not hard to see that for $p=3,5,7$ there is no such a gap if the cardinality of the set is divisible by $p$ . 

 \item What is the minimal size of a set in $\mathbb{F}_p^2$, which is equidistributed in at most $k$ directions?
 In particular, is it true that Ghidelli's bound is tight \cite{ghidelli}, i.e., is it possible to construct sets of cardinality $kp$ which have $\lceil \frac{p+k+2}{k+1}\rceil$ special directions?

 Even for $p=13$ this question is still open.
\end{enumerate}
\section*{Acknowledgement}
G. Kiss was supported by the J\'anos Bolyai Research Grant, the New National Excellence Program ÚNKP-22-5-ELTE-1154 New National Excellence Program of the Ministry for Culture and
Innovation and the Hungarian National Research, Development and Innovation Office - NKFIH (grant no. K124749, no. K142993).

G. Somlai is a Fulbright research fellow at the Graduate Center of the City University of New York. This research exchange program is also supported by the Magyar Állami Eötvös Ösztöndíj.

G. Somlai is a J\'anos Bolyai fellowship holder and supported by the OTKA grant no. SNN 132625.

\end{document}